\documentclass[journal,10pt]{IEEEtran}                                                          

\IEEEoverridecommandlockouts                              

\usepackage[draft]{hyperref}
\usepackage{graphicx,epsfig,tikz,pgf,cancel,color,float,ifpdf,amssymb,amsmath, amsbsy,amsopn, amstext, todonotes}
\usepackage{latexsym,amssymb,amsmath, amsbsy,amsopn, amstext,amsthm}
\usepackage{graphicx,epsfig,tikz,pgf,hyperref,cancel,color,float,ifpdf}
\usepackage{extarrows,mathrsfs}
\usepackage{amsmath,  amssymb,stfloats}
\usepackage{subfigure,setspace}
\setcounter{MaxMatrixCols}{20}

\newcommand{\vertiii}[1]{{\left\vert\kern-0.25ex\left\vert\kern-0.25ex\left\vert #1 
		\right\vert\kern-0.25ex\right\vert\kern-0.25ex\right\vert}}

\graphicspath{{./}}

\def\diag{diag}

\def\A{\mathcal{A}}

\def\d{\delta}

\def\D{\Delta}

\def\M{\mathcal{M}}

\def\0{{\bf 0}}

\newcommand{\R}{{\mathbb R}}

\newcommand{\Z}{{\mathbb Z}}

\newtheorem{definition}{\bfseries Definition}
\newtheorem{proposition}{\bfseries Proposition}
\newtheorem{example}{\bfseries Example}
\newtheorem{theorem}{\bfseries Theorem}

\newtheorem{corollary}{\bfseries Corollary}
\newtheorem{lemma}{\bfseries Lemma}

\newtheorem{remark}{\bfseries Remark}

\allowdisplaybreaks[1]

\title{Approximation of the Constrained Joint Spectral Radius via Algebraic Lifting}
\author{Xiangru Xu,  Beh{\c{c}}et A{\c{c}}{\i}kme{\c{s}}e
	\thanks{X. Xu is with the Department of Mechanical Engineering, University of Wisconsin-Madison, Madison, WI, USA. Email: {\tt\small xiangru.xu@wisc.edu}. B. A{\c{c}}{\i}kme{\c{s}}e is with the Department of Aeronautics \& Astronautics, University of Washington, Seattle, WA, USA. Email: {\tt\small behcet@uw.edu}. 

	}
}

\begin{document}
	\maketitle
	\thispagestyle{empty}
	\pagestyle{empty}
\begin{abstract}   
This paper studies the constrained switching (linear) system which is a discrete-time switched linear system whose switching sequences are constrained by a deterministic finite automaton. The stability of a constrained switching system is characterized by its constrained joint spectral radius that is known to be difficult to compute or approximate. 
Using the semi-tensor product of matrices, the matrix-form expression of a constrained switching system is shown to be equivalent to that of a lifted arbitrary switching system. Then the constrained joint/generalized spectral radius of a constrained switching system is proved to be equal to
the joint/generalized spectral radius of its lifted arbitrary switching system which can be approximated by off-the-shelf algorithms. 
\end{abstract}

\section{Introduction}\label{sec:intro}

Consider a finite set of matrices $\mathcal{A}=\{A_1,\dots,A_m\}$ with $A_i\in\mathbb{R}^{n\times n}$, $i\in[m]$ where $[m]:=\{1,2,\dots,m\}$.
Dynamics of the discrete-time switched linear system associated with $\mathcal{A}$ is described as 
\begin{align}\label{eqswitch}
x_{k+1}=A_{\sigma_k}x_k
\end{align}
where $x_k\in\R^n$ is the state, and $\sigma_k\in[m]$ is the switching sequence (or mode) of the system that can be considered as the input.  As there is no constraint on the switching sequence, the system \eqref{eqswitch} is called an \emph{arbitrary switching system} and denoted by $S(\A)$ \cite{philippe2016stability,liberzon2012switching}. We call \eqref{eqswitch}  \emph{stable} if $\lim_{k\rightarrow\infty} x_k={\bf 0}$ for any $x_0\in\mathbb{R}^n$ and any switching sequence $\sigma_0\sigma_1\dots$ (see Definition 1.3 in \cite{jungers2009joint}).

The \emph{joint spectral radius} (JSR) of $\mathcal{A}$ is defined as 
\begin{align}
\rho(\mathcal{A})=\limsup_{k\rightarrow\infty}\rho_k(\A)^{1/k}\label{JSR}
\end{align}
where 
\begin{align}
\rho_k(\A)=\max_{\sigma\in[m]^k}\|A_\sigma\|,\label{JSRk}
\end{align}
$\sigma=\sigma_0\dots\sigma_{k-1}$ is a switching sequence of length $k$ with $\sigma_0,\dots,\sigma_{k-1}\in[m]$, $A_\sigma:=A_{\sigma_{k-1}}\dots A_{\sigma_0}$ is the product of $k$ matrices, and $\|\cdot\|$ is any given sub-multiplicative matrix norm on $\R^{n\times n}$. The concept of JSR for a finite set of matrices is a natural generalization of the spectral radius for a single (square) matrix, and was first introduced  in \cite{rota1960note}. Because of the equivalence of matrix norms in finite-dimensional vector spaces, the value of $\rho(\A)$ is independent of the choice of the matrix norm. The JSR found applications in many  areas such as the continuity of wavelet functions, the capacity of codes, and trackable graphs  \cite{jungers2009joint}. Particularly,  the value of $\rho(\mathcal{A})$ characterizes the stability of the switched system \eqref{eqswitch} as \eqref{eqswitch} is stable if and only if $\rho(\mathcal{A})<1$ (see Corollary 1.1 in \cite{jungers2009joint}). 
However, the value of $\rho(\mathcal{A})$ is notoriously difficult to compute or approximate (see the NP-hardness and the undecidability results in \cite{blondel2000boundedness,tsitsiklis1997lyapunov}).
In the past decade, various methods for approximating $\rho(\mathcal{A})$ have been proposed, such as using branch and bound \cite{gripenberg1996computing}, convex combination  \cite{blondel2005computationally},  lifted polytope \cite{jungers2014lifted},  sum-of-squares \cite{parrilo2008approximation,legat2016generating}, and path-complete graph Lyapunov function \cite{ahmadi2014joint}.

By replacing the norm in \eqref{JSRk} with the spectral radius, Daubechies and Lagarias introduced the concept of \emph{generalized spectral radius} (GSR) of $\mathcal{A}$ in  \cite{daubechies1992sets}. Specifically, the GSR of $\mathcal{A}$ is defined as 
\begin{align}
\bar\rho(\mathcal{A})=\limsup_{k\rightarrow\infty}\bar\rho_k(\A)^{1/k}\label{GSR}
\end{align}
where  
\begin{align}
\bar\rho_k(\A)=\max_{\sigma\in[m]^k}\rho(A_\sigma).\label{GSRk}
\end{align}
The \emph{Berger-Wang Theorem} proves that the JSR and the GSR of $\A$ are equivalent, i.e.,  $\rho(\mathcal{A})=\bar\rho(\mathcal{A})$ (see Theorem 4 in \cite{berger1992bounded}).

The switching sequence  $\sigma$ of the switched system \eqref{eqswitch} can be subject to certain constraints. For instance, the switching sequence needs to satisfy a Markovian-like property, that is, $\sigma_k$ that is allowable is dependent on $\sigma_{k-1}$ \cite{kozyakin2014berger,dai2012gel}, or it needs to be accepted by an automaton \cite{wang2017stability}.   Following  \cite{philippe2016stability}, in this paper, we consider switching sequences that are constrained by a deterministic finite automaton. 
\begin{definition}
A deterministic finite automaton  (DFA) $\mathcal{M}$ is a 3-tuple $(Q,U,f)$ where $Q=\{q_1,q_2,\dots,q_\ell\}$ is a finite set of states,
	$U=\{1,2,\dots,m\}$ is a finite set of input symbols, $f: Q\times U\rightarrow Q$ is a transition function\footnote{The transition function is a \emph{partial function}  that may not be defined for all state-input pairs; without loss of generality, we assume that for each state $q\in Q$ there is at least one $u\in U$ such that $f(q,u)$ is defined.}.
\end{definition} 
For system \eqref{eqswitch}, a finite switching sequence $\sigma=\sigma_1...\sigma_k$ is said to be \emph{accepted} by $\M$ if $\sigma_1,...,\sigma_k\in U$ and there exists a finite state sequence $q_{j_1}q_{j_2}\dots q_{j_{k+1}}$ such that $q_{j_1},q_{j_2},\dots, q_{j_{k+1}}\in Q$ and $q_{j_{i+1}}=f(q_{j_i},\sigma_i)$ are defined for $i=1,...,k$; an infinite switching sequence accepted by $\M$ is defined similarly by taking $k=\infty$ \cite{philippe2016stability,eilenberg1974automata}. The set of switching sequences accepted by $\mathcal{M}$ is denoted by $L(\mathcal{M})$. 
Formally, the \emph{constrained switching system}, denoted as $S(\mathcal{A},\mathcal{M})$, is the  switched linear   system as shown in \eqref{eqswitch} where  $A_i\in\mathcal{A}$ for $i\in[m]$ and the switching sequence $\sigma\in L(\mathcal{M})$.

The concept of JSR can be naturally generalized to the case where the switching sequences are constrained by a DFA. Specifically, the \emph{constrained joint spectral radius} (CJSR) of  $S(\mathcal{A},\mathcal{M})$ is defined as
\begin{align}
\rho(\mathcal{A},\mathcal{M})=\limsup_{k\rightarrow\infty}\rho_k(\A,\mathcal{M})^{1/k}\label{CJSR}
\end{align}
where 
\begin{align}
\rho_k(\A,\mathcal{M})=\max_{\substack{\sigma\in[m]^k\\\sigma\in L(\mathcal{M})}}\|A_\sigma\|.\label{CJSRk}
\end{align} 
Similarly, the \emph{constrained generalized spectral radius} (CGSR) of $S(\mathcal{A},\mathcal{M})$ is defined as 
\begin{align}
\bar\rho(\mathcal{A},\mathcal{M})=\limsup_{k\rightarrow\infty}\bar\rho_k(\A,\mathcal{M})^{1/k}\label{CGSR}
\end{align}
where 
\begin{align}
\bar\rho_k(\A,\mathcal{M})=\max_{\substack{\sigma\in[m]^k\\\sigma\in L(\mathcal{M})}}\rho(A_\sigma).\label{CGSRk}
\end{align}
The value of $\rho(\A,\M)$ is independent of the choice of matrix norm in \eqref{CJSRk}, and it characterizes the stability of the constrained switching system $S(\mathcal{A},\mathcal{M})$ as \emph{$S(\mathcal{A},\mathcal{M})$ is stable if and only if $\rho(\mathcal{A},\mathcal{M})<1$} (see Theorem 1.1 in \cite{philippe2016stability} and  Corollary 2.8 in \cite{dai2012gel}). Due to the constraint on switching sequences, the computation or approximation of $\rho(\A,\M)$ is more difficult than $\rho(\A)$, with only a few results known in the literature:
in \cite{philippe2016stability}, the problem of approximating $\rho(\A,\M)$ was reduced to finding a good multinorm, 
where an arbitrarily accurate approximation can be obtained by solving a semi-definite program and using the quadratic-type multinorm;
in \cite{legat2016generating}, an algorithm that generates a sequence of matrices with asymptotic growth rate close to the CJSR was proposed, based on the dual solution of a sum-of-squares optimization
program. The switched linear system whose switching sequences are constrained by a Muller automaton was considered in \cite{wang2017stability}, where a lifting method based on the Kronecker product was proposed and used to show how different notions of stability are related; the switched linear system whose switching sequences are constrained by a given square matrix was considered in  \cite{dai2012gel} and \cite{kozyakin2014berger}, where the Markovian joint spectral radius was discussed and the Markovian analog of the Berger-Wang formula  was derived.

In this paper, we propose a novel lifting method to approximate the CJSR and CGSR of $S(\mathcal{A},\mathcal{M})$ (see Figure \ref{fig0} for a summary of the main results).
The contributions of the paper are summarized as follows: 
i) we propose a unified matrix-based formulation for the arbitrary switching system and the constrained switching system by using the semi-tensor product (STP) of matrices, and based on this formulation we prove that the matrix expression of $S(\mathcal{A},\mathcal{M})$ is equivalent to that of an arbitrary switching system $S(\mathcal{A}_\M)$ which can be considered as a lifted system of $S(\mathcal{A},\mathcal{M})$; ii) we prove that  $\rho(\mathcal{A},\mathcal{M})=\bar\rho(\mathcal{A},\mathcal{M})=\rho(\mathcal{A}_\M)=\bar\rho(\mathcal{A}_\M)$ which can be seen as a version of the Berger-Wang formula for the constrained switching system.
The equivalence of the four quantities (i.e., $\rho(\mathcal{A},\mathcal{M}),\bar\rho(\mathcal{A},\mathcal{M}),\rho(\mathcal{A}_\M),\bar\rho(\mathcal{A}_\M)$) implies that the approximation of $\rho(\mathcal{A},\mathcal{M})$ or $\bar\rho(\mathcal{A},\mathcal{M})$ can be converted into the approximation of $\rho(\mathcal{A}_\M)$ or $\bar\rho(\mathcal{A}_\M)$ for which many off-the-shelf algorithms can be leveraged.

The remainder of the paper is organized as follows: Section \ref{sec:stp} introduces  some preliminaries about STP; Section \ref{sec:arbitrary} presents the STP-based matrix formulation for  the arbitrary switching system, the DFA and the constrained switching system; Section \ref{sec:constrain} gives the main result which proves the equivalence of the CJSR/CGSR of $S(\mathcal{A},\mathcal{M})$ and the  JSR/GSR of $S(\mathcal{A}_\M)$; and Section \ref{sec:conclu} presents some concluding remarks.

\begin{figure}[!t]
	\centering
	\includegraphics[width=0.5\linewidth]{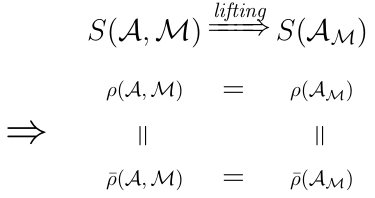}
	\caption{Illustration of the main results.}\label{fig0}
\end{figure}

\section{Preliminaries}\label{sec:stp}
Denote $\R_{>0}$, $\R_{\geq 0}$, $\Z_{>0}$, $\Z_{\geq 0}$ as the sets of positive real numbers, nonnegative real numbers, positive integers, nonnegative integers, respectively. Denote $|\cdot|$ as the cardinality of a set. 
Denote $I_n$ as the $n\times n$ identity matrix, 
$col_i(M)$ as the $i$-th column of matrix $M$, and $Col(M)$ as the set of columns of matrix $M$. Define $\d_n^k:=col_k(I_n)$ where $k\in[n]$, $\d_n^0:=\mathbf{0}_n$ where $\mathbf{0}_n$ is the zero vector of dimension $n$, $\D_n:=\{\d_n^1,\cdots,\d_n^n\}$, $\D_n^e:=\D_n\cup\d_n^0$, and $\d_n[i_1,i_2,\cdots,i_m]:=[\d_n^{i_1},\d_n^{i_2},\cdots,\d_n^{i_m}]\in \mathbb{R}^{n\times m}$ where $\delta_n^{i_j}\in \D_n^e$, $j\in[m]$.

Given two matrices $A\in \mathbb{R}^{m\times n}$ and $B\in \mathbb{R}^{p\times q}$, their conventional matrix product $AB$ requires  $n=p$. The Kronecker product of $A$ and $B$, however, has no such dimensional restriction on $n$ and $p$.

\begin{definition}\label{dkron} 
	Given two matrices $A=(a_{ij})\in \mathbb{R}^{m\times n}$ and $B\in \mathbb{R}^{p\times q}$, their Kronecker product is defined as
	\begin{align*}
	A\otimes B:=\begin{pmatrix}
	a_{11}B & \cdots & a_{1n}B\\
	\vdots & \ddots & \vdots\\
	a_{m1}B & \cdots & a_{mn}B
	\end{pmatrix}.
	\end{align*}
\end{definition}

The following two properties of Kronecker product will be used in later sections \cite{horn1990matrix}:
\begin{itemize}
	\item Given matrices $A\in \mathbb{R}^{m_A\times n_A}$, $B\in \mathbb{R}^{m_B\times n_B}$, $C\in \mathbb{R}^{n_A\times n_C}$, $D\in \mathbb{R}^{n_B\times n_D}$, it holds that 
	\begin{align}\label{prokron}
	(A\otimes B)(C\otimes D)=(AC)\otimes (BD).	
	\end{align} 
	\item Given two matrices $A\in \mathbb{R}^{p\times p}$ and $B\in \mathbb{R}^{q\times q}$, if $\lambda_1,\dots,\lambda_{p}$ are the eigenvalues of $A$ and $\mu_1,\dots,\mu_{q}$ are the eigenvalues of $B$, then the eigenvalues of $A\otimes B$ are $\lambda_i\mu_j$ for $i=1,\dots,p$ and $j=1,\dots,q$.
\end{itemize}

Similar to the Kronecker product, the \emph{semi-tensor product} of matrices can be  defined for two matrices with arbitrary dimensions \cite{cheng2012introduction}.

\begin{definition}\label{dstp} (Def. 1 in \cite{laschov2012controllability}) Given two matrices $A\in \mathbb{R}^{m\times n}$ and $B\in \mathbb{R}^{p\times q}$, their semi-tensor product (STP) is defined as
\begin{align}\label{1.1}
A\ltimes B:=\left(A\otimes I_{s/n}\right)\left(B\otimes I_{s/p}\right)
\end{align}
where $s$ is the least common multiple of $n$ and $p$, and $\otimes$ is the Kronecker product.
\end{definition}

STP degenerates to the conventional matrix product when $n=p$, and it becomes the Kronecker product when $n$ and $p$ are co-prime. 
STP not only has the properties of associativity and distributivity as the conventional matrix product, but also has some unique properties as shown below (see also Page 2 of \cite{cheng2010linear}). 
\begin{itemize}
	\item Given a column vector  $x\in \R^n$ and a matrix $A$, it holds that
	\begin{align}\label{eqprop1}
	x\ltimes A=(I_n\otimes A)\ltimes x.
	\end{align}
	
	\item Given a column vector $x\in\D_n$, there exists a matrix
	$\Phi_{n}=diag(\d_n^1,\d_n^2,\dots,\d_n^n)\in {\mathcal{L}}_{n^2\times n}$ such that
	\begin{align}\label{eqprop2}
	x\ltimes x=\Phi_{n}x.
	\end{align}
	
	\item Given two column vectors
	$x\in \R^{n}$ and $y\in \R^{m}$, there is a matrix
	$W_{[n,m]}= [\d_{m}^1\ltimes\d_{n}^1,\cdots,\d_{m}^{m}\ltimes\d_{n}^1,\cdots,
	\d_{m}^1\ltimes\d_{n}^{n},\cdots,\d_{m}^{m}\ltimes\d_{n}^{n}]\in \mathcal{L}_{mn\times mn}$ such that 
	\begin{align}\label{eqprop3}
	W_{[n,m]}\ltimes x\ltimes y=y\ltimes x.
	\end{align}
	
\end{itemize}
Note that $x$ or $y$ in \eqref{eqprop1}, \eqref{eqprop2} and \eqref{eqprop3} is a given column vector, instead of an indeterminate.

\section{STP Formulation of the Arbitrary and the Constrained Switching System}\label{sec:arbitrary}
\subsection{STP Formulation of the Arbitrary Switching System}\label{subsec:STPASS}

In this subsection, we present a STP-based matrix formulation for the arbitrary switching system $S(\A)$.

Given a finite set of matrices $\mathcal{A}=\{A_1,\dots,A_m\}$ where $A_i\in\mathbb{R}^{n\times n}$, $i\in [m]$, we define a matrix $H$ as 
\begin{align}
H=[A_1,\dots,A_m]\in\mathbb{R}^{n\times nm}.\label{eqH}
\end{align}
Recall that $\delta_m^i$ is a column vector of length $m$ with the only nonzero entry, ``1'', in the $i$-th position. For any $i\in[m]$, we identify $i$ with $\delta_m^i$, denoted as 
\begin{align}
i\sim\delta_m^i.
\end{align}
For any $\sigma_k$, which is the switching sequence of system \eqref{eqswitch} at time step $k$, we define the \emph{vector form} of $\sigma_k$ as a column vector $\sigma(k)\in \Delta_m$ and let $\sigma(k)=\delta_m^i$ when $\sigma_k=i$ where $i\in[m]$. In other words, we identify  $\sigma_k=i\in[m]$ with its vector form $\sigma(k)=\delta_m^i\in\Delta_m$, denoted as 
\begin{align}
\sigma_k\sim \sigma(k).
\end{align}
Define $x(k)\in\mathbb{R}^n$ as the state of system \eqref{eqswitch} by letting $x(k)=x_k$ for any $k\in\Z_{\geq 0}$. In the rest of the paper, we will use $\sigma_k$ and $\sigma(k)$, $x_k$ and $x(k)$ interchangeably when there is no confusion. The \emph{one-to-one correspondence} of the scalar $\sigma_k$ (resp. $x_k$) and the vector $\sigma(k)$ (resp. $x(k)$) is the \emph{key} to converting the algebraic equation as shown in \eqref{eqswitch} into the STP-based matrix formulation as shown in \eqref{dynstpxk}.

\begin{proposition}\label{propdynswt}
	Dynamics of $S(\A)$ as shown in \eqref{eqswitch} can be written equivalent as 
	\begin{align}\label{dynstpxk}
	x(k+1)=H\ltimes \sigma(k)\ltimes x(k)
	\end{align}
	where $H$ is given in \eqref{eqH}, $x(k)\in\R^n$ and $\sigma(k)\in\Delta_m$ are the vector forms of the state and input, respectively.
\end{proposition}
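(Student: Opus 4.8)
The plan is to verify the identity by a pointwise check: for each fixed value $\sigma_k=i\in[m]$, I would show that the right-hand side $H\ltimes\sigma(k)\ltimes x(k)$ collapses to $A_ix_k$, which is exactly the update rule in \eqref{eqswitch}. Since $\sigma(k)=\delta_m^i$ ranges over $\Delta_m$ as $\sigma_k$ ranges over $[m]$, establishing the identity for each $i$ suffices. By associativity of the STP I may group the product as $(H\ltimes\sigma(k))\ltimes x(k)$ and evaluate the inner factor $H\ltimes\sigma(k)$ first.

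First I would compute $H\ltimes\delta_m^i$ directly from Definition \ref{dstp}. Here $H\in\mathbb{R}^{n\times nm}$ and $\delta_m^i\in\mathbb{R}^{m\times 1}$, so the relevant dimensions in the definition are $nm$ (the column count of $H$) and $m$ (the row count of $\delta_m^i$); since $m$ divides $nm$, their least common multiple is $s=nm$, giving the two padding factors $I_{s/nm}=I_1$ and $I_{s/m}=I_n$. The definition therefore reduces to
\begin{align*}
H\ltimes\delta_m^i=(H\otimes I_1)(\delta_m^i\otimes I_n)=H(\delta_m^i\otimes I_n).
\end{align*}
The key observation is that $\delta_m^i\otimes I_n$ is the $nm\times n$ block-column matrix whose $i$-th $n\times n$ block equals $I_n$ while all other blocks vanish. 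Since $H=[A_1,\dots,A_m]$ is partitioned conformably into $m$ blocks of size $n\times n$, this multiplication simply selects the $i$-th block, so that $H\ltimes\delta_m^i=A_i$.

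Having reduced $H\ltimes\sigma(k)$ to the ordinary matrix $A_i=A_{\sigma_k}$, the remaining product is $A_{\sigma_k}\ltimes x(k)$ with $A_{\sigma_k}\in\mathbb{R}^{n\times n}$ and $x(k)\in\mathbb{R}^n$. Because the column count of the left factor matches the row count of the right factor, the STP coincides with the conventional matrix product, yielding $A_{\sigma_k}\ltimes x(k)=A_{\sigma_k}x_k=x_{k+1}=x(k+1)$. Combining the two steps through associativity then closes the argument.

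I do not expect a genuine obstacle here; the proposition is essentially a bookkeeping verification that the STP definition, specialized to these dimensions, reproduces block selection followed by ordinary matrix multiplication. The only point demanding care is the dimension accounting in Definition \ref{dstp}—correctly identifying that $s=nm$ and that the two Kronecker padding factors are $I_1$ and $I_n$—together with recognizing the block-picking action of $\delta_m^i\otimes I_n$ on the partitioned matrix $H$.
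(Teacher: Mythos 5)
Your proof is correct and follows the same route as the paper's: the paper simply asserts $H\ltimes\sigma(k)=A_{\sigma_k}$ "by the definition of STP" and concludes, while you additionally spell out the dimension bookkeeping ($s=nm$, padding factors $I_1$ and $I_n$) and the block-selection action of $\delta_m^i\otimes I_n$ that justifies that assertion. No gaps; yours is just a more detailed version of the same argument.
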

\begin{proof} By the definition of STP, for any $k\in\Z_{\geq 0}$, it holds that $H\ltimes\sigma(k)=A_{\sigma_k}$  where  $\sigma(k)\sim\sigma_k$, $\sigma_k\in[m]$ and $A_{\sigma_k}\in\mathcal{A}$. Hence, \eqref{dynstpxk} is equivalent to $x(k+1)=A_{\sigma_k}\ltimes x(k)=A_{\sigma_k} x(k)$ where STP degenerates to the conventional matrix product as $A_{\sigma_k}\in\mathbb{R}^{n\times n}$ and $x(k)\in \mathbb{R}^{n}$. Note that $x(k+1)=A_{\sigma_k} x(k)$ is just the dynamics of $S(\A)$ as shown in \eqref{eqswitch}, which completes the proof.
\end{proof}

A finite switching sequence $\sigma=\sigma_0\dots\sigma_{k-1}\in[m]^k$ can be expressed equivalently into its \emph{vector form} 
\begin{align}\label{vecsigma1}
\tilde\sigma=\ltimes_{i=0}^{k-1} \sigma(k-1-i)\in\Delta_{m^k}
\end{align} 
where $\sigma_i\sim\sigma(i)$, $i\in\{0,1,\dots,k-1\}$. The following lemma explains the one-to-one correspondence between $\sigma$ and $\tilde\sigma$, denoted as $\sigma\sim\tilde\sigma$. The proof of the lemma can be easily obtained by the definition of STP and is thus omitted.

\begin{lemma}
If $\sigma_i=j_i\sim \delta_m^{j_i}$ where  $j_i\in[m]$, $i\in\{0,1,\dots,k-1\}$, then the sequence $\sigma=\sigma_0\dots\sigma_{k-1}$ is identified with its vector form 
\begin{align}\label{vecsigma2}
\tilde\sigma:=\delta _{m^k}^\tau=\delta_{m}^{j_{k-1}}\ltimes\dots\ltimes\delta_{m}^{j_0}\in\Delta_{m^k}
\end{align}
where
\begin{align}
\tau=1+\Sigma_{i=1}^k(j_{k-i}-1)m^{k-i}\in[m^k].\label{eqtau}
\end{align}
Conversely, given a vector $\tilde\sigma:=\delta _{m^k}^\tau\in\Delta_{m^k}$ where $\tau\in[m^k]$, a set of numbers $j_0,\dots,j_{k-1}\in[m]$ satisfying \eqref{eqtau} can be uniquely  determined, which corresponds to a switching sequence  $\sigma=\sigma_0\dots\sigma_{k-1}\in[m]^k$.	
\end{lemma}

For any $k\geq 2$, from \eqref{dynstpxk} and property \eqref{eqprop1}, it holds that
\begin{align}
x(k)&=H\ltimes\sigma(k-1)\ltimes H\ltimes\sigma(k-2)\nonumber\\
&\quad \quad \quad \ltimes\cdots \ltimes H\ltimes\sigma(0)\ltimes x(0)\nonumber\\
&=\tilde H_k\ltimes_{i=0}^{k-1} \sigma(k-1-i)\ltimes x(0)\label{eqxk}
\end{align}
where 
\begin{align}\label{tilH}
\tilde H_k=H\ltimes_{i=1}^{k-1}(I_{m^i}\otimes H).
\end{align}
Noting that the matrix $\tilde H_k$ in \eqref{tilH} is $n\times nm^k$, we partition it into $m^k$ sub-matrices as follows:
\begin{align}
\tilde H_k=[\tilde H_{k1}\;\tilde H_{k2}\;\dots\;\tilde H_{km^k}]\label{eqHksplit}
\end{align}
where $\tilde H_{ki}\in\mathbb{R}^{n\times n}$, $i\in[m^k]$.
Given an arbitrary finite switching sequence $\sigma=\sigma_0\dots\sigma_{k-1}\in[m]^k$, from \eqref{eqxk} it holds that
\begin{align}\label{eqx}
x(k)=A_\sigma x(0)
\end{align}
where $A_\sigma=A_{\sigma_{k-1}}\dots A_{\sigma_0}\in\R^{n\times n}$. 
If $\sigma_i=j_i$ where  $j_i\in[m]$, $i\in\{0,1,\dots,k-1\}$, then $\tilde H_{k\tau}=A_\sigma$ where $\tau$ is given by \eqref{eqtau}. Hence, $\tilde H_k$ consists of matrices $A_\sigma$ for all possible switching sequences $\sigma$. 

\begin{remark}\label{remark2} 
It is known that $\bar\rho_k(\A)^{1/k}\leq \bar\rho(\mathcal{A})=\rho(\mathcal{A})\leq \rho_k(\A)^{1/k}$ for any $k\in\Z_{>0}$ \cite{jungers2009joint}. 
Since $\rho_k(\A)=\max_{i\in[m^k]}\|\tilde H_{ki}\|$ and $\bar\rho_k(\A)=\max_{i\in[m^k]}\rho(\tilde H_{ki})$ where $\|\cdot\|$ is a sub-multiplicative matrix norm on $\R^{n\times n}$,  
it holds that $\max_{i\in[m^k]}\rho(\tilde H_{ki})^{1/k}\leq \rho(\mathcal{A})\leq \max_{i\in[m^k]}\|\tilde H_{ki}\|^{1/k}$ 	
where $\tilde H_{ki}$ is given in \eqref{eqHksplit} with $\tilde H_{k}$ given in \eqref{tilH}. This inequality, however,  is computationally impractical when $k$ is large since $\tilde H_{k}$ requires computing  all the matrix products of length $k$.
\end{remark}

\begin{example}\label{ex1}
Consider a finite set of matrices $\A=\{A_1,A_2,A_3,A_4\}$ given in  \cite{philippe2016stability}:
\begin{align*}
&A_1=\begin{pmatrix}
0.94 & 0.56 \\-0.35 & 0.73
\end{pmatrix},\;A_2=\begin{pmatrix}
0.94 & 0.56 \\0.14 & 0.73
\end{pmatrix},\\
&A_3=\begin{pmatrix}
0.94 & 0.56 \\-0.35 & 0.46
\end{pmatrix},\;A_4=\begin{pmatrix}
0.94 & 0.56 \\0.14 & 0.46
\end{pmatrix}.
\end{align*}
The STP-based matrix expression \eqref{dynstpxk} for $S(\A)$ is $x(k+1)=H\ltimes \sigma(k)\ltimes x(k)$, where $H=[A_1\;A_2\;A_3\;A_4]$, $x(k)\in\R^2$ and $\sigma(k)\in\Delta_4$. For a switching sequence $\sigma=\sigma_0\sigma_1$ where $\sigma_0=3,\sigma_1=2$, we have $\sigma_0\sim \delta_4^3,\sigma_1\sim \delta_4^2$. Therefore, by \eqref{vecsigma1}, the vector form of $\sigma$ is calculated as $\tilde\sigma=\delta_4^2\ltimes \delta_4^3=\delta_{16}^{7}$ with $\tau=7$ by \eqref{vecsigma2}. Then, $x(2)=A_\sigma x(0)$ by \eqref{eqx} where $A_\sigma=A_3A_2=\tilde H_{27}$,  $\tilde H_2=H\ltimes (I_4\otimes H)$ by \eqref{tilH} and $\tilde H_{27}$ is the 7th sub-matrix of $\tilde H_2$ as defined in \eqref{eqHksplit}.

\end{example}

\subsection{STP Formulation of the DFA}\label{subsec:DFA}
In this subsection we revisit the STP-based matrix expression for the DFA $\mathcal{M}$ (see \cite{xu2012matrix,xu2013matrix,xiangru2013observability} for more detail). 

Consider a DFA $\mathcal{M}=(Q,U,f)$ where $Q=\{q_1,\dots,q_\ell\},\quad U=\{1,\dots,m\}.$ 
Define the \emph{transition structure matrix} of $\mathcal{M}$ as
\begin{align}
F=[F_1\;F_2\;\dots\;F_m]\in \mathbb{R}^{\ell\times m\ell}\label{TSMF}
\end{align}
where $F_j\in \mathbb{R}^{\ell\times \ell}$ is defined as follows: for $j\in[m]$,  
\begin{equation}\label{structmatrix}
{F_j}_{(s,t)}=\begin{cases}
1,\quad\mbox{if} \;q_s= f(q_t,j);\\
0,\quad\mbox{otherwise.}
\end{cases}
\end{equation}
The DFA $\mathcal{M}$ can be seen as a discrete-time dynamical system as follows: 
given an initial state $q_{j_0}$ and an input sequence $\sigma=\sigma_{0}\sigma_{1}\dots$,  $\mathcal{M}$ evolves according to $q_{j_{i+1}}=f(q_{j_i},\sigma_{i})$ if the transition function $f(q_{j_i},\sigma_{i})$ is defined, where $j_0,j_1,\dots\in[\ell],\sigma_0,\sigma_1,\dots\in[m]$. 

Identify each state $q_i\in Q$ with its vector form $\d_\ell^i$ where $i\in [\ell]$ (denoted as $q_i\sim \d_\ell^i$) so that $Q$ is identified with $\D_\ell$. Similarly, identify the input $j\in U$ with its vector form $\d_m^j$ where $j\in [m]$ (denoted as $j\sim \d_m^j$) so that $U$ is identified with $\D_m$. 
Let $q(k)\in\Delta_\ell^e$ and $\sigma(k)\in\Delta_m$ be the \emph{vector forms} of the state and the input of $\mathcal{M}$ at time step $k$, respectively. We let $\sigma(k)=\delta_m^\kappa$  for some $\kappa\in[m]$ if the input $\sigma_k=\delta_m^\kappa$; similarly,  we let $q(k)=\delta_\ell^s$ for some $s\in[\ell]$ if the state $q_k=\delta_\ell^s$ and let $q(k)=\delta_\ell^0$ if the state $q_k$ is undefined. Note that if $f(q_{j_i},\sigma_{i})$ is undefined for some $i\in\Z_{>0}$, then $q_{j_{i+1}}$, $q_{j_{i+2}}$,... are all undefined.

\begin{proposition}(Theorem 1 in \cite{xu2012matrix}) 
	The matrix expression of the dynamics of $\mathcal{M}$ is
	\begin{align}\label{dynstpqk}
	q(k+1)=F\ltimes \sigma(k)\ltimes q(k)
	\end{align}
	where $F$ is defined in \eqref{TSMF}, $q(k)\in\Delta_\ell^e$ and $\sigma(k)\in\Delta_m$ are the vector forms of the state and input of $\mathcal{M}$, respectively.
\end{proposition}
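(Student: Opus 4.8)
The plan is to mirror the proof of Proposition~\ref{propdynswt}, since \eqref{dynstpqk} has exactly the same structural form as \eqref{dynstpxk}, with $F$ playing the role of $H$ and $q(k)$ the role of $x(k)$. The essential content is to show that the two successive semi-tensor products reproduce the update rule $q_{j_{k+1}}=f(q_{j_k},\sigma_k)$ together with the convention that an undefined transition sends the state to $\delta_\ell^0$.

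First I would evaluate the inner product $F\ltimes\sigma(k)$. Writing $\sigma(k)=\delta_m^\kappa$ for the input $\kappa\in[m]$ active at step $k$, and noting that $F\in\mathbb{R}^{\ell\times m\ell}$ by \eqref{TSMF} while $\delta_m^\kappa\in\mathbb{R}^m$, the least common multiple in Definition~\ref{dstp} is $m\ell$, so the product collapses to $F\ltimes\sigma(k)=F\,(\delta_m^\kappa\otimes I_\ell)$. Since $\delta_m^\kappa\otimes I_\ell$ is the block-column matrix whose $\kappa$-th $\ell\times\ell$ block equals $I_\ell$ and whose remaining blocks vanish, the block partition $F=[F_1,\dots,F_m]$ gives $F\ltimes\sigma(k)=F_\kappa=F_{\sigma_k}$. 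This is the exact analogue of the identity $H\ltimes\sigma(k)=A_{\sigma_k}$ used in Proposition~\ref{propdynswt}.

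Next I would form the outer product with $q(k)$. Because $F_{\sigma_k}\in\mathbb{R}^{\ell\times\ell}$ and $q(k)\in\mathbb{R}^{\ell}$ have matching inner dimension, the semi-tensor product reduces to the ordinary matrix-vector product, so $F\ltimes\sigma(k)\ltimes q(k)=F_{\sigma_k}\,q(k)$. It then remains to check that $F_{\sigma_k}q(k)$ equals the vector form of $q(k+1)$ in every case. If $q(k)=\delta_\ell^t$ corresponds to a defined state $q_t$, then $F_{\sigma_k}\delta_\ell^t$ is the $t$-th column of $F_{\sigma_k}$, which by \eqref{structmatrix} equals $\delta_\ell^s$ with $\delta_\ell^s=f(\delta_\ell^t,\delta_m^{\sigma_k})$ when that transition is defined, and equals $\mathbf{0}_\ell=\delta_\ell^0$ (a zero column) when it is not; if instead $q(k)=\delta_\ell^0=\mathbf{0}_\ell$, then $F_{\sigma_k}\mathbf{0}_\ell=\mathbf{0}_\ell=\delta_\ell^0$. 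In all three situations this agrees with the stated evolution rule and the convention $q(s)\equiv\delta_\ell^0$ for $s$ beyond the first undefined transition.

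The step I expect to require the most care is precisely this last case analysis around the partial transition function: unlike the arbitrary switching system of Proposition~\ref{propdynswt}, where every product $A_{\sigma_k}x(k)$ is automatically well defined, here I must confirm that the zero columns of $F_j$ encoding undefined transitions, together with the absorbing value $\delta_\ell^0=\mathbf{0}_\ell$, interact correctly under the matrix product so that the dynamics stay consistent for all $k$. The dimension bookkeeping inside the two semi-tensor products is otherwise routine and identical to the earlier argument.
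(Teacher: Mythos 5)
Your proof is correct. The paper itself gives no proof of this proposition (it defers to the cited references \cite{xu2012matrix,xu2013matrix}), but your argument is exactly the computation used for Proposition~\ref{propdynswt} --- $F\ltimes\sigma(k)=F_{\sigma_k}$ by the block structure of $F$ and the definition of STP, followed by an ordinary matrix-vector product --- together with the correct case analysis for undefined transitions and the absorbing value $q(k)=\delta_\ell^0=\mathbf{0}_\ell$, so it matches the intended approach.
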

Similar to \eqref{eqxk}, for any $k\geq 2$ we have 
\begin{align}
q(k)&=\tilde F_k\ltimes_{i=0}^{k-1} \sigma(k-1-i)q(0)\label{eqqk}
\end{align}
where $\tilde F_k=F\ltimes_{i=1}^{k-1}(I_{m^i}\otimes F)$. Partition the matrix $\tilde F_k$ into $m^k$ sub-matrices as
$\tilde F_k=[\tilde F_{k1}\;\tilde F_{k2}\;\dots\;\tilde F_{km^k}]$
where $\tilde F_{ki}\in\mathbb{R}^{\ell\times \ell}$, $i\in[m^k]$.
Given an arbitrary switching sequence $\sigma=\sigma_0\dots\sigma_{k-1}\in[m]^k$, from \eqref{dynstpqk} we have 
\begin{align}
q(k)=F_\sigma q(0)
\end{align}
where the matrix $F_\sigma=F_{\sigma_{k-1}}\dots F_{\sigma_0}\in\R^{\ell\times \ell}$ contains the transition information from $q(0)$ to $q(k)$ (refer to \cite{xu2012matrix} for more detail). Recalling the vector form of a switching sequence defined in \eqref{vecsigma1}-\eqref{eqtau}, it is clear that $F_\sigma=\tilde F_{ks}$ when the vector form of $\sigma$ is $\delta_{m^k}^s$ where $s\in[m^k]$. Then the following corollary follows directly.

\begin{corollary}\label{cor1}
	Given a switching sequence $\sigma=\sigma_0\dots\sigma_{k-1}\in[m]^k$, $\sigma\in L(\mathcal{M})$ if and only if  $F_{\sigma_{k-1}}\dots F_{\sigma_0}\neq{\bf 0}$.  
\end{corollary}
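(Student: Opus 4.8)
The plan is to prove the biconditional in Corollary \ref{cor1} by unwinding the definition of $F_\sigma$ as an STP (equivalently, ordinary) product of the transition structure matrices $F_{\sigma_{k-1}}\cdots F_{\sigma_0}$ and relating its nonvanishing to the existence of a valid state path in $\mathcal{M}$. The central observation I would exploit is that each $F_j$, by its definition in \eqref{structmatrix}, is a $0/1$ matrix whose $(s,t)$ entry is $1$ exactly when the input $j$ sends state $\d_\ell^t$ to state $\d_\ell^s$; because $\mathcal{M}$ is \emph{deterministic}, each column of $F_j$ has at most one nonzero entry, so $F_j$ is a logical-type matrix (a column being $\mathbf{0}_\ell$ precisely when $f(\cdot,j)$ is undefined on that state). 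This column structure is what makes the products behave combinatorially like path-tracking.

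First I would establish the forward-propagation interpretation of the product. Starting from a basis vector $q(0)=\d_\ell^{j_0}$, the dynamics \eqref{dynstpqk} give $q(1)=F_{\sigma_0}\d_\ell^{j_0}$, which is $\d_\ell^{j_1}$ when $f(q_{j_0},\sigma_0)=q_{j_1}$ is defined and is $\d_\ell^0=\mathbf{0}_\ell$ otherwise, by the convention introduced before the proposition. Iterating, $q(k)=F_\sigma\, q(0)$ equals $\d_\ell^{j_k}$ when every transition $f(q_{j_i},\sigma_i)$ along the way is defined, and equals $\mathbf{0}_\ell$ as soon as any one of them fails (since a zero column annihilates the running vector and zero is absorbing). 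Thus for a single starting column, $F_\sigma\,\d_\ell^{j_0}\neq\mathbf{0}_\ell$ if and only if the state sequence $q_{j_0}q_{j_1}\cdots q_{j_k}$ is a legitimate run of $\mathcal{M}$ on the input $\sigma$.

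Next I would translate this column-wise statement into the matrix-level statement of the corollary. Since $F_\sigma$ is an $\ell\times\ell$ matrix, $F_\sigma\neq\mathbf{0}$ iff at least one of its columns is nonzero, i.e.\ iff there exists some starting state $q_{j_0}$ with $F_\sigma\,\d_\ell^{j_0}\neq\mathbf{0}_\ell$. By the previous step this is equivalent to the existence of a starting state from which the whole input string $\sigma=\sigma_0\cdots\sigma_{k-1}$ can be read off along defined transitions, producing a state sequence $q_{j_0}\cdots q_{j_{k-1}}$ with each $f(q_{j_i},\sigma_i)$ defined. That is exactly the acceptance condition $\sigma\in L(\mathcal{M})$ as defined earlier in the excerpt. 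I would also note the identification $F_\sigma=\tilde F_{ks}$ (where $\d_{m^k}^s$ is the vector form of $\sigma$) already recorded in the text, so the claim can equivalently be phrased in terms of the partition blocks of $\tilde F_k$.

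The main obstacle I anticipate is bookkeeping rather than conceptual: I must be careful that the product order $F_{\sigma_{k-1}}\cdots F_{\sigma_0}$ matches the forward-in-time composition of transitions (the right-most factor $F_{\sigma_0}$ acts first on $q(0)$), and that the STP $\ltimes$ here reduces to the ordinary matrix product because all $F_j$ are square $\ell\times\ell$, so no dimension-matching via Kronecker factors intervenes. A second delicate point is justifying that a zero column in some intermediate $F_{\sigma_i}$ genuinely forces the full product to kill the corresponding starting column, which follows because once the running vector hits $\mathbf{0}_\ell$ it stays there; here the determinism assumption (at most one $1$ per column) guarantees no spurious nonzero entries can reappear and no cancellation occurs, since all entries are nonnegative. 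With these two points handled, the equivalence is immediate.
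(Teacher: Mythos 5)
Your proposal is correct and follows essentially the same route the paper takes (the paper leaves the corollary as an immediate consequence of the identity $q(k)=F_\sigma q(0)$ and the column structure of the $F_j$'s; you simply make that column-tracking argument explicit). The key points you flag — at most one nonzero per column by determinism, zero being absorbing along a run, and $F_\sigma\neq{\bf 0}$ iff some starting column survives — are exactly the facts the paper's surrounding discussion relies on.
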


\begin{example}\label{ex2}
		\begin{figure}[!h]
		\centering
		\includegraphics[width=0.7\linewidth]{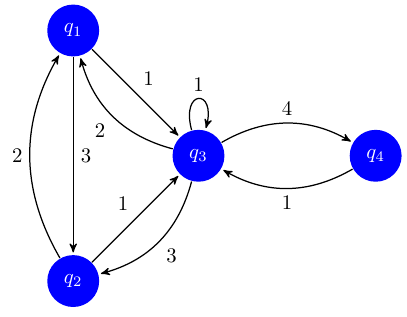}
		\caption{The DFA $\M$ in Example \ref{ex2}.}\label{fig1}
	\end{figure}
	Consider the DFA $\mathcal{M}=(Q,U,f)$ given in Section 4 of \cite{philippe2016stability}, where  $Q=\{q_1,q_2,q_3,q_4\}$, $U=\{1,2,3,4\}$, and its transition map is shown in Fig. \ref{fig1}. 
	The matrix form of the dynamics of $\M$ is given by \eqref{dynstpqk} where $q(k)\in\Delta_4^e$, $\sigma(k)\in\Delta_4$, and $F=[F_1\; F_2\; F_3\; F_4]$ with  $F_1=\delta_4[3,3,3,3]$, $F_2=\delta_4[0,1,1,0]$, $F_3=\delta_4[2,0,2,0]$, $F_4=\delta_4[0,0,4,0]$ (refer to the notations at the beginning of Section \ref{sec:stp}). Given an input sequence $\sigma=231$, its vector form  is $\tilde \sigma:=\delta_4^1\ltimes\delta_4^3\ltimes\delta_4^2=\delta_{64}^{10}$ according to \eqref{vecsigma2} with $\tau=10$ by \eqref{eqtau}. Calculate the matrix $\tilde F_3$, from which the $10$-th block submatrix is $\delta_4[0,3,3,0]$. This submatrix can be interpreted as follows: with the input sequence $\sigma=231$, $\M$ transitions to $q_3$ if it starts from $q_2$ or $q_3$, and the transition is not defined if it starts from $q_1$ or $q_4$. This fact can be easily verified by Fig. \ref{fig1}.

\end{example}

\subsection{STP Formulation of the Constrained Switching System}\label{subsec:STPCSS}
In this subsection, we present a STP-based matrix formulation for the 
constrained switching system $S(\mathcal{A},\mathcal{M})$.

Consider a finite set of matrices $\mathcal{A}=\{A_1,\dots,A_m\}$ where $A_i\in\mathbb{R}^{n\times n}$, $i\in [m]$, and a DFA $\mathcal{M}=(Q,U,f)$ where $|Q|=\ell,|U|=m$. Dynamics of $S(\A)$ and $\mathcal{M}$ are expressed into their respective matrix forms \eqref{dynstpxk}  and \eqref{dynstpqk}, which are restated below.
\begin{align*}
&\mbox{dynamics of}\;S(\A):\quad\; x(k+1)=H\ltimes \sigma(k)\ltimes x(k)\\
&\mbox{dynamics of}\;\mathcal{M}:\quad\quad\;\, q(k+1)=F\ltimes \sigma(k)\ltimes q(k)
\end{align*} 
Define $\xi(k)$ as the state of  $S(\mathcal{A},\mathcal{M})$ at time step $k$ by
\begin{align}
\xi(k)=q(k)\ltimes x(k)\in\R^{n\ell}.\label{dfnxik}
\end{align}
Recalling \eqref{1.1}, it is clear that \eqref{dfnxik} is equivalent to 
\begin{align}\label{xieq}
\xi(k)=q(k)\otimes x(k).
\end{align}
As $\xi$ is a Kronecker product of $q$ and $x$, the state of  $S(\mathcal{A},\mathcal{M})$, $\xi$ can be seen as a lifting of $x$, the state of  $S(\mathcal{A})$. By the definition of \eqref{xieq} and recall that $q(k)\in\Delta_\ell^e$, $x(k)\in\R^{n}$, the block structure of $\xi$ can be  interpreted as follows: if $q(k)=\delta_\ell^0$, then $\xi(k)$ is equal to $\delta_{n\ell}^0$; if $q(k)=\delta_\ell^s$ for some $s\in[\ell]$, then the $s$-th block of $\xi(k)$ is equal to $x(k)$ with all the other blocks  equal to zeros, where each block is a column vector of dimension $n$.

The STP-based matrix expression for $S(\mathcal{A},\mathcal{M})$ is given by the following theorem.

\begin{theorem}\label{thmdynxi}The matrix expression of the dynamics of  $S(\mathcal{A},\mathcal{M})$ is 
	\begin{align}
	\xi(k+1)=\Phi \ltimes\sigma(k)\ltimes\xi(k)\label{eqnxik}
	\end{align}
	where  $\sigma(k)\in\Delta_m$ is the vector form of the input, $\xi(k)\in\R^{n\ell}$ is the  vector form of the state defined in \eqref{dfnxik}, and $\Phi$ is the transition structure matrix defined as
	\begin{align}\label{eqphi0}
	\Phi=[\Phi_1\;\dots\;\Phi_m]\in \mathbb{R}^{n\ell\times mn\ell}
	\end{align}
	with
	\begin{align}
	\Phi_i=F_i\otimes A_i\in \mathbb{R}^{n\ell\times n\ell},\;\forall i\in[m],\label{Phii}
	\end{align}
	and $F_i$ given in \eqref{TSMF}-\eqref{structmatrix}.
\end{theorem}
\begin{proof} Without loss of generality, suppose that the switching sequence at time step $k$ is $\sigma(k)=\delta_m^\kappa$ where $\kappa$ is an arbitrary number satisfying $\kappa\in[m]$. Then, from \eqref{dynstpxk}  and \eqref{dynstpqk},  $q(k+1)=F_\kappa q(k)$ and $x(k+1)=A_\kappa x(k)$ hold.
	Therefore, 
	\begin{align}
	\xi(k+1)&=F_\kappa q(k)\ltimes A_\kappa x(k)\nonumber\\
	&=F_\kappa\ltimes(I_m\otimes A_\kappa)\ltimes q(k)\ltimes x(k)\nonumber\\
	&=(F_\kappa\otimes I_n)(I_m\otimes A_\kappa) \xi(k)\nonumber\\
	&=(F_\kappa\otimes A_\kappa) \xi(k)\nonumber\\
	&=\Phi_\kappa \xi(k)\label{STPxi}
	\end{align}
	where the second equality uses property \eqref{eqprop1}, the third equality uses definition \eqref{1.1}, and the fourth equality uses property \eqref{prokron}. Noting that $\Phi \ltimes\sigma(k)=\Phi_\kappa$, the conclusion follows immediately.  
\end{proof} 
Define a finite set of matrices $\mathcal{A}_\M$ as  
\begin{align}
\mathcal{A}_\M=\{\Phi_1,\dots,\Phi_m\}\label{AM}
\end{align}
where $\Phi_i$ is defined in \eqref{Phii}. 
By Proposition \ref{propdynswt}, dynamics of $S(\mathcal{A}_\M)$, the arbitrary switching system associated with $\mathcal{A}_\M$, is given by 
\begin{align}\label{eqlift}
\tilde x(k+1)=\Phi\ltimes \sigma(k)\ltimes \tilde x(k)
\end{align}
where $\Phi$ is the same as \eqref{eqphi0}, $\tilde x(k)\in\R^{n\ell}$ is the vector form of state of $S(\mathcal{A}_\M)$, and $\sigma(k)\in\Delta_m$ is the vector form of input of $S(\mathcal{A}_\M)$.

Because the same transition structure matrix $\Phi$ is used in both \eqref{eqnxik} and \eqref{eqlift}, the matrix-form dynamical equation of $S(\mathcal{A},\mathcal{M})$ is equivalent to that of $S(\mathcal{A}_\mathcal{M})$. 
Therefore, the arbitrary switching system $S(\mathcal{A}_\mathcal{M})$ can  be considered as a \emph{lifted system} of the constrained switching system $S(\mathcal{A},\mathcal{M})$. This lifting has a clear dynamic system interpretation under the STP framework because it arises from the STP-based formulations of $S(\mathcal{A})$, $\mathcal{M}$ and $S(\mathcal{A},\mathcal{M})$.

\section{Equivalence of the CJSR/CGSR of $S(\mathcal{A},\mathcal{M})$ and the JSR/GSR of $S(\mathcal{A}_\mathcal{M})$}\label{sec:constrain}

Consider a DFA $\mathcal{M}=(Q,U,f)$ where $|Q|=\ell,|U|=m$, a finite set of matrices $\mathcal{A}=\{A_1,\dots,A_m\}$ where $A_i\in\mathbb{R}^{n\times n}$, $i\in [m]$, and a finite set of matrices $\mathcal{A}_\M=\{\Phi_1,\dots,\Phi_m\}$ where $\Phi_i$ is defined in \eqref{Phii}.
In this section, we will prove that the CJSR and CGSR of $S(\mathcal{A},\mathcal{M})$ and the  JSR and GSR of  $S(\mathcal{A}_\M)$ are all equivalent. This is shown in the following theorem,  which is the main result of this paper. 
 
\begin{theorem}\label{thm2}
	The following equality holds: 
	$$
	\rho(\mathcal{A},\mathcal{M})=\bar\rho(\mathcal{A},\mathcal{M})=\rho(\mathcal{A}_\M)=\bar\rho(\mathcal{A}_\M)
	$$
\end{theorem}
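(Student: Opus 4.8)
The plan is to establish the four-way equality through a single cyclic chain of inequalities that begins and ends at $\rho(\A,\M)$, so that all intermediate quantities are squeezed to a common value. The backbone is the observation that the lifted products factor as a single Kronecker product: for any finite sequence $\sigma=\sigma_0\cdots\sigma_{k-1}$, since $\Phi_i=F_i\otimes A_i$, the mixed-product property of the Kronecker product gives
\[
\Phi_\sigma=\Phi_{\sigma_{k-1}}\cdots\Phi_{\sigma_0}=(F_{\sigma_{k-1}}\cdots F_{\sigma_0})\otimes(A_{\sigma_{k-1}}\cdots A_{\sigma_0})=F_\sigma\otimes A_\sigma.
\]
Combined with Corollary \ref{cor1}, this shows $\Phi_\sigma=\mathbf{0}$ exactly when $\sigma\notin L(\M)$, so the vanishing terms may be discarded and the maxima defining $\rho_k(\A_\M)$ and $\bar\rho_k(\A_\M)$ range only over accepted sequences $\sigma\in L(\M)$ (aliveness of $\M$ guarantees these index sets are nonempty for every $k$).

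First I would compare the joint spectral radii. Since $\rho(\A,\M)$ and $\rho(\A_\M)$ are norm-independent, I may evaluate $\rho_k$ using the spectral norm $\|\cdot\|_2$. For accepted $\sigma$ the matrix $F_\sigma$ has some entry $(F_\sigma)_{s,t}=1$, so the corresponding block of $\Phi_\sigma=F_\sigma\otimes A_\sigma$ equals $A_\sigma$ verbatim; because the spectral norm of a matrix dominates that of any of its submatrices, $\|\Phi_\sigma\|_2\ge\|A_\sigma\|_2$. Taking the maximum over $\sigma\in L(\M)$ of length $k$ gives $\rho_k(\A_\M)\ge\rho_k(\A,\M)$, and passing to the upper limit of $k$-th roots as in \eqref{CJSR} yields $\rho(\A_\M)\ge\rho(\A,\M)$.

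Next I would compare the generalized spectral radii in the opposite direction. By the eigenvalue property of the Kronecker product, the eigenvalues of $\Phi_\sigma=F_\sigma\otimes A_\sigma$ are the pairwise products of those of $F_\sigma$ and $A_\sigma$, whence $\rho(\Phi_\sigma)=\rho(F_\sigma)\rho(A_\sigma)$. Each $F_\sigma$ is a nonnegative $0$--$1$ matrix with at most one nonzero entry per column (as $f$ is a partial function), so its column sums are at most $1$ and therefore $\rho(F_\sigma)\le 1$. Hence $\rho(\Phi_\sigma)\le\rho(A_\sigma)$, and taking maxima over accepted sequences gives $\bar\rho_k(\A_\M)\le\bar\rho_k(\A,\M)$, so $\bar\rho(\A_\M)\le\bar\rho(\A,\M)$ in the limit via \eqref{CGSR}.

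Finally I would close the loop. The Berger-Wang theorem applied to the ordinary set $\A_\M$ gives $\rho(\A_\M)=\bar\rho(\A_\M)$, and the elementary bound $\rho(A_\sigma)\le\|A_\sigma\|$ gives $\bar\rho(\A,\M)\le\rho(\A,\M)$. Assembling the pieces produces the cyclic chain
\[
\rho(\A,\M)\le\rho(\A_\M)=\bar\rho(\A_\M)\le\bar\rho(\A,\M)\le\rho(\A,\M),
\]
whose two ends coincide, forcing every inequality to be an equality and giving the theorem. The step I expect to be the genuine obstacle—and the reason the argument must be routed cyclically rather than proved entrywise—is the generalized-radius comparison: term by term one obtains only the one-sided estimate $\bar\rho_k(\A_\M)\le\bar\rho_k(\A,\M)$, because $\rho(F_\sigma)$ may vanish (precisely when the input sequence induces no cycle among the automaton states), and no matching lower bound holds for an individual $\sigma$. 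The Berger-Wang detour is exactly what repairs this asymmetry, converting the one-sided generalized-radius estimate into a joint-radius estimate, where the clean submatrix bound of the second paragraph supplies the missing direction.
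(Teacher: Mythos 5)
Your proof is correct, and it rests on the same skeleton as the paper's --- the factorization $\Phi_\sigma=F_\sigma\otimes A_\sigma$, Corollary \ref{cor1}, and the Berger--Wang theorem applied to the lifted family $\A_\M$ as the bridge between the two spectral radii --- but the two technical lemmas you use to connect the lifted and constrained quantities are genuinely different and simpler. For the joint spectral radius the paper constructs the custom block norm $\vertiii{S}=\max_{j}\sum_{i}\|s_{ij}\|$, verifies its sub-multiplicativity, and uses the column structure of $F_\sigma$ to get the \emph{exact} identity $\rho_k(\A_\M)=\rho_k(\A,\M)$ for every $k$; you instead get only the one-sided bound $\rho_k(\A_\M)\ge\rho_k(\A,\M)$ from the submatrix/spectral-norm inequality, which is weaker termwise but costs no norm construction. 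For the generalized spectral radius the paper carries out a structural analysis of $F_\sigma$ (similarity to a block diagonal of zero, strictly lower triangular, and permutation blocks), introduces the sublanguage $L^{(per)}(\M)$ of sequences with $\rho(F_\sigma)=1$, and proves $\bar\rho_k(\A_\M)=\bar\rho_k^{(per)}(\A,\M)$ exactly; you replace all of this with the elementary estimate $\rho(F_\sigma)\le\|F_\sigma\|_1\le 1$, again obtaining only an inequality. Your cyclic squeeze $\rho(\A,\M)\le\rho(\A_\M)=\bar\rho(\A_\M)\le\bar\rho(\A,\M)\le\rho(\A,\M)$ then recovers the four-way equality, and your closing diagnosis of where the asymmetry lies is accurate. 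What the paper's heavier machinery buys is the finite-$k$ identity $\rho_k(\A_\M)=\rho_k(\A,\M)$, which is of independent computational interest (it lets one transfer the bounds of Proposition \ref{propstpjsr} at each fixed $k$); what your route buys is brevity and the elimination of both the bespoke norm and the permutation-block case analysis. One small point worth making explicit if you write this up: the sub-multiplicativity of the spectral norm is what makes $\rho_k(\A_\M)$ computed in $\|\cdot\|_2$ a legitimate choice in \eqref{JSRk}, and aliveness of $\M$ (which you do invoke) is needed so that the maxima over $\sigma\in L(\M)$, $|\sigma|=k$ are never over an empty set.
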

\begin{proof} 
	i) For a matrix $S\in\R^{n\ell\times n\ell}$, consider $S$ as a $\ell\times \ell$ block matrix $S=(s_{ij})$ with each block $s_{ij}\in\R^{n\times n}$.  
	Define the following function $\vertiii{\cdot}: \R^{n\ell\times n\ell}\rightarrow \R_{\geq 0}$ (inspired by the function given in \cite{kozyakin2014berger}): 
	\begin{align}
	\vertiii{S}=\max_{j\in[\ell]}\sum_{i=1}^\ell\|s_{ij}\|\label{eqnorm}
	\end{align}
	where  $\|\cdot\|$ is any given sub-multiplicative norm defined on $\R^{n\times n}$. Then we prove that the function $\vertiii{\cdot}$ is a sub-multiplicative norm on $\R^{n\ell\times n\ell}$ similar to the proof given in  \cite{kozyakin2014berger}. Indeed, $\vertiii{\cdot}$ is absolutely homogeneous and positive-definite; furthermore, given two matrices $A=(a_{ij})\in\R^{n\ell\times n\ell}$, $B=(b_{ij})\in\R^{n\ell\times n\ell}$, $\vertiii{\cdot}$ is sub-additive can be easily seen from the fact that   $\|a_{ij}+b_{ij}\|\leq \|a_{ij}\|+\|b_{ij}\|$ for any $i,j\in[\ell]$, and $\vertiii{\cdot}$ is sub-multiplicative because
	\begin{align*}
	\vertiii{AB}=&\max_{j\in[\ell]}\sum_{i=1}^\ell\|\sum_{k=1}^\ell a_{ik}b_{kj}\|\\
	\leq&\max_{j\in[\ell]}\sum_{k=1}^\ell\sum_{i=1}^\ell\|a_{ik}\|\|b_{kj}\|\\
	=&\max_{j\in[\ell]}\sum_{k=1}^\ell\left(\left(\sum_{i=1}^\ell\|a_{ik}\|\right)\|b_{kj}\|\right)\\
	\leq&\max_{j\in[\ell]}\sum_{k=1}^\ell\vertiii{A}\|b_{kj}\|\\
	=&\vertiii{A}\left(\max_{j\in[\ell]}\sum_{k=1}^\ell\|b_{kj}\|\right)\\
	=&\vertiii{A}\vertiii{B}.
	\end{align*} 
	ii) Recalling \eqref{JSRk} and \eqref{CJSRk}, $\rho_k(\A_\M)$ is defined as $\rho_k(\A_\M)=\max_{\sigma\in[m]^k}\vertiii{\Phi_\sigma}$ and $\rho_k(\A,\mathcal{M})$ is defined as $\rho_k(\A,\mathcal{M})=\max_{\sigma\in[m]^k,\sigma\in L(\mathcal{M})}\|A_\sigma\|$,
	where $\Phi_\sigma=\Phi_{\sigma_{k-1}}\dots \Phi_{\sigma_0}$, $A_\sigma=A_{\sigma_{k-1}}\dots A_{\sigma_0}$ and $\sigma=\sigma_0\dots\sigma_{k-1}$ is any switching sequence of length $k$.
	From \eqref{Phii}, we have
	\begin{align}
	\Phi_\sigma=&(F_{\sigma_{k-1}}\otimes A_{\sigma_{k-1}})(F_{\sigma_{k-2}}\otimes A_{\sigma_{k-2}})\dots(F_{\sigma_0}\otimes A_{\sigma_0})\nonumber\\
	=&(F_{\sigma_{k-1}}F_{\sigma_{k-2}}\dots F_{\sigma_0})\otimes (A_{\sigma_{k-1}}A_{\sigma_{k-2}}\dots A_{\sigma_0})\nonumber\\
	=&F_\sigma\otimes A_\sigma\label{eqphi}
	\end{align}
	where $F_\sigma:=F_{\sigma_{k-1}}F_{\sigma_{k-2}}\dots F_{\sigma_0}$. By Corollary \ref{cor1}, $F_\sigma$ has the  property that $F_\sigma\neq{\bf 0}$ if and only if $\sigma\in L(\mathcal{M})$; furthermore, there is at most one entry $``1"$ in each column of $F_\sigma$ with other entries being ``$0$". Then, by  the definition of the norm $\vertiii{\cdot}$ in \eqref{eqnorm}, it is easy to see that  $\vertiii{\Phi_\sigma}= \|A_\sigma\|$ when $\sigma\in L(\mathcal{M})$, and  $\vertiii{\Phi_\sigma}=0$ when $\sigma\notin L(\mathcal{M})$. Hence, it holds that $\rho_k(\A_\M)=\rho_k(\A,\mathcal{M})$ for any $k\in\Z_{k>0}$. Since $\rho(\mathcal{A}_\M)=\limsup_{k\rightarrow\infty}\rho_k(\A_\M)^{1/k}$ and $\rho(\mathcal{A},\mathcal{M})=\limsup_{k\rightarrow\infty}\rho_k(\A,\mathcal{M})^{1/k}$, it follows that
	\begin{align}
	\rho(\A_\M)=\rho(\A,\mathcal{M}).\label{pfeq1}
	\end{align} 
	iii) Recalling \eqref{GSRk} and \eqref{CGSRk}, $\bar\rho_k(\A_\M)$ is defined as 
	$\bar\rho_k(\A_\M)=\max_{\sigma\in[m]^k}\rho(\Phi_\sigma)$ and $\bar\rho_k(\A,\mathcal{M})$ is defined as $\bar\rho_k(\A,\mathcal{M})=\max_{\sigma\in[m]^k,\sigma\in L(\mathcal{M})}\rho(A_\sigma)$. 
	For any input sequence $\sigma$, $F_\sigma$ is a square matrix and has at most one entry $``1"$ in each column with other entries being ``$0$". 
	It is not hard to see that there exist a zero matrix $K_1={\bf 0}$, a strictly lower triangular matrix $K_2$, and 
	a permutation matrix $K_3$, such that $F_\sigma$ is similar to a matrix $K:=\diag(K_1,K_2,K_3)$, denoted as $F_\sigma\sim_s K$, where $K_1,K_2,K_3$ can be absent.
	Define $L^{(per)}(\M)=\{\sigma: \sigma\in L(\M), F_\sigma\sim_s\diag(K_1,K_2,K_3)\;\text{where}\;K_3\;\textit{is present}\}$, where (per) stands for ``permutation matrix". We claim that i) the spectral radius of $F_\sigma$ is equal to $1$ if and only if $\sigma\in L^{(per)}(\M)$; ii) for any $k_0\in\Z_{>0}$, there exist some $k'_0>k_0$ and $\sigma\in L^{(per)}(\M)$ such that $|\sigma|=k_0'$. 
	Since the eigenvalues of a permutation matrix lie on the unit circle, the absolute value of the eigenvalues of a permutation matrix are equal to $1$. Hence, given $F_\sigma$ where $\sigma\in L^{(per)}(\M)$, the absolute value of its eigenvalues are either equal to $1$ or $0$, where the latter case happens when $F_\sigma\sim_s\diag(K_1,K_2,K_3)$ with $K_1$ or $K_2$ present;  given $F_\sigma$ where $\sigma\in L(\M),\sigma\notin L^{(per)}(\M)$, all of its eigenvalues are equal to $0$. The first claim is thus proved. Given any $k_0\in\Z_{>0}$, there always exists some $k_0'$ with $k'_0>k_0$ such that $\M(V,E)$ has a loop of length $k_0'$, because $\M$ is assumed to be alive. Hence, there exists some state $q\in Q$ and input sequence $\sigma\in L(\M)$ such that $|\sigma|=k_0'$ and $f(q,\sigma)=q$. This means that there is at least a ``$1$" in the diagonal of $F_\sigma$, which implies that $\sigma\in L^{(per)}(\M)$ by definition. The second claim is thus proved.
	
	Define $\bar\rho_k^{(per)}(\A,\mathcal{M})=\max_{\sigma\in[m]^k,\sigma\in L^{(per)}(\mathcal{M})}\rho(A_\sigma)$, and $\bar\rho^{(per)}(\mathcal{A},\mathcal{M})=\limsup_{k\rightarrow\infty}\bar\rho_k^{(per)}(\A,\mathcal{M})^{1/k}$. Recalling the second property of the Kronecker product in Section \ref{sec:stp}, it is easy to see that $\bar\rho_k(\A_\M)=\bar\rho_k^{(per)}(\A,\mathcal{M})$. Then it follows that $\bar\rho(\A_\M)=\bar\rho^{(per)}(\A,\mathcal{M})$. 
	By the Berger-Wang Theorem,  it holds that 
	\begin{align}
	\rho(\A_\M)=\bar\rho(\A_\M).\label{pfeq2}
	\end{align}
	iv) Therefore, $\bar\rho^{(per)}(\A,\mathcal{M})=\rho(\A,\mathcal{M})$. Since $L^{(per)}(\M)\subseteq L(\M)$, it holds that $\bar\rho^{(per)}(\A,\M)\leq\bar \rho(\A,\M)$. By definition, it holds that $\bar\rho(\A,\M)\leq \rho(\A,\M)$. Therefore, we have 
	\begin{align}
	\bar\rho(\A,\mathcal{M})=\rho(\A,\mathcal{M}).\label{pfeq3}
	\end{align}
The proof is complete by combining \eqref{pfeq1}, \eqref{pfeq2} and \eqref{pfeq3}.
\end{proof}	

Theorem \ref{thm2} can be seen as a version of the Berger-Wang formula for the constrained switching system \cite{berger1992bounded}. The importance of Theorem \ref{thm2} lies in that the problem of approximating the CJSR/CGSR of $S(\mathcal{A},\mathcal{M})$ can be converted into the problem of approximating the JSR/GSR of its lifted system $S(\mathcal{A}_\M)$, for which many off-the-shelf algorithms exist\footnote{Different algorithms to compute the JSR/GSR of an arbitrary switching system was summarized in \cite{vankeerberghen2014jsr} where a Matlab toolbox was also provided. }.

\begin{example}\label{ex3}
	Consider a constrained switching system $S(\A,\M)$ where the set $\A$ is given in Example \ref{ex1} and the DFA $\M$ is given in Example \ref{ex2}.  For $i\in[4]$, calculate matrices $\Phi_i=F_i\otimes A_i$ where $F_i$ and $A_i$ are given in Example \ref{ex1} and Example \ref{ex2}, respectively. Define the set of matrices $\A_\M=\{\Phi_1,\Phi_2,\Phi_3,\Phi_4\}$. We use the JSR toolbox in \cite{vankeerberghen2014jsr} to approximate the value of $\rho(\mathcal{A}_\M)$, or equivalently, $\rho(\mathcal{A},\mathcal{M})$, by the conclusion of Theorem \ref{thm2}. In a computer with 3.5GHz CPU and 16GB memory, it takes about \underline{13.7 seconds} for the \emph{jsr} function to return the following bounds: 
	$$
	0.974817198\leq \rho(\mathcal{A},\mathcal{M})\leq 0.974817295
	$$ 
	where the Gripenberg's algorithm and the conitope algorithm are utilized \cite{jungers2014lifted,gripenberg1996computing}.

	In \cite{philippe2016stability}, the multinorm method was proposed to
	approximate the CJSR of a constrained switching system,  where several algebraic lifting methods (e.g. the T-product lift, the M-path-dependent lift) were also combined to improve the estimation accuracy.  As pointed out in \cite{philippe2016stability}, the number of edges in the $T$-product lift of $S$ increases exponentially with T and the number of nodes and edges in the $M$-path-dependent lift of $S$ both grow exponentially with $M$. Figure 7 in \cite{philippe2016stability} shows that execution times for producing CJSR estimates using the $T$-Product lift (or the $M$-path-dependent lift) grow exponentially with $T$ (or $M$). Therefore, estimation of CJSR using the $T$-product lift  or the $M$-path-dependent lift is restricted to small $T$ or $M$.	
	Using the toolbox provided in \cite{philippe2016stability} and fixing $T=8$, it takes about \underline{581 seconds} (on the same computer as above) for the T-product lift to obtain the following bounds: 
	$$
	0.9289\leq \rho(\mathcal{A},\mathcal{M})\leq 0.9761,
	$$ 
	while by fixing $M=7$, it takes about \underline{1156 seconds} for the M-path-dependent lift to obtain the following bounds: 
	$$
	0.9277\leq \rho(\mathcal{A},\mathcal{M})\leq  0.9748.
	$$
	Our lifting method clearly  returns a more accurate approximation of $\rho(\mathcal{A},\mathcal{M})$ in a much shorter time.  Compared with the method in \cite{philippe2016stability}, our method provides a promising computational approach for approximating CJSR/CGSR. 
\end{example}

\begin{remark}
In \cite{wang2017stability}, a lifting method based on the Kronecker product was proposed to study different notions of stability of a constrained switching system. 
Given a finite set of matrices $\mathcal{A}=\{A_1,\dots,A_m\}$ where $A_i\in\mathbb{R}^{n\times n}$, $i\in [m]$, and a 
DFA\footnote{A DFA $\mathcal{M}$ can be considered as a directed and labeled graph $\mathcal{M}(V,E)$ where $V$ is the set of nodes and $E$ is the set of edges. The edge $(v,w,s)\in E$ if and only if there is an edge from the  node $v$ to $w$ via the label $s$. See page 2 in \cite{wang2017stability} for more detail.} $\M(V,E)$ where $V=\{v_1,\dots,v_{\ell}\}$, the lifting of $\A$ and $\M$ proposed in \cite{wang2017stability} is defined as a finite set of matrices 
\begin{align}\label{tilam}
\tilde \A_{\M}:=\{A_{(v_s,v_t,j)}:(v_s,v_t,j)\in E\}
\end{align}
where $A_{(v_s,v_t,j)}:=(\delta_\ell^t(\delta_\ell^s)^\top)\otimes A_j$ 
and $\otimes$ is the Kronecker product. 
It is easily seen that the matrix $\Phi_i$ defined in \eqref{Phii} has the same sizes as $A_{(v_s,v_t,j)}$ and 
$F_j=\sum_{\{s,t:(v_s,v_t,j)\in E\}}\delta_\ell^t(\delta_\ell^s)^\top$ for any $j\in[m]$, 
where the matrix $F_j$ is defined in \eqref{structmatrix}. Therefore, 
\begin{align}
\Phi_j=\sum_{\{s,t:(v_s,v_t,j)\in E\}}A_{(v_s,v_t,j)},\;\forall j\in[m],
\end{align}
which implies that $|\A_\M|\leq |\tilde \A_\M|$ where $\A_\M$ is defined in \eqref{AM} and $\tilde \A_\M$ is defined in \eqref{tilam}. 
Compared with \cite{wang2017stability}, the lifting method proposed in the present paper is much more compact as the size of the set of lifted matrices can be significantly reduced. 
\end{remark}

\begin{remark}
In \cite{kozyakin2014berger}, a lifting method based on the Kronecker product was proposed to investigate the Markovian-type JSR/GSR of a constrained switching system.  
Given a finite set of matrices $\mathcal{A}=\{A_1,\dots,A_m\}$ where $A_i\in\mathbb{R}^{n\times n}$, $i\in[m]$ and a matrix $\Omega=(\omega_{ij})\in\R^{m\times m}$ where $\omega_{ij}\in\{0,1\}$, the matrix product $A_{\sigma_k}\dots A_{\sigma_1}$ for $k\geq 2$ is called \emph{Markovian} if each pair of indices $\{\sigma_{i},\sigma_{i+1}\}$ is $\Omega$-admissible, i.e., $\omega_{\sigma_{i+1}\sigma_{i}}=1$ for all $i\in[k-1]$. The \emph{Markovian joint spectral radius} of $\A$ and $\Omega$ is defined in \cite{kozyakin2014berger} as 
\begin{align}
\rho(\A,\Omega):=\limsup_{k\rightarrow \infty}\rho_k(\A,\Omega)^{1/k}
\end{align}
where 
$\rho_k(\A,\Omega)=\max\{\|A_{\sigma_k}\dots A_{\sigma_1}\|: \omega_{\sigma_{i+1}\sigma_{i}}=1,\forall i\in[k-1]\}$, 
and the \emph{Markovian generalized spectral radius} of $\A$ and $\Omega$ is defined as
\begin{align}
\bar\rho(\A,\Omega):=\limsup_{k\rightarrow \infty}\bar\rho_k(\A,\Omega)^{1/k}
\end{align} 
where 
$\bar\rho_k(\A,\Omega)=\max\{\rho(A_{\sigma_k}\dots A_{\sigma_1}): \omega_{\sigma_{i+1}\sigma_{i}}=1,\forall i\in[k-1]\}$. 
The $\Omega$-lift of $\A$, which is defined on page 5 of \cite{kozyakin2014berger}, is a finite set of matrices\footnote{By using the lifted system $\A_{\Omega}$, the Markovian analog of the Berger-Wang formula was proved in \cite{kozyakin2014berger}: the Markovian joint spectral radius of $\A$ and $\Omega$ is equivalent to the Markovian generalized spectral radius of $\A$ and $\Omega$, which is also equivalent to the joint/generalized spectral radius of $\A_{\Omega}$.} 
\begin{align}\label{Aomeg}
\A_{\Omega}:=\{\Omega_i\otimes A_i: i\in[m]\}
\end{align} 
where $\Omega_i$ is the $i$-th column of $\Omega$. 
The Markovian joint spectral radius  discussed in \cite{kozyakin2014berger} is different from the CJSR/CGSR discussed in the present paper, because switching sequences in \cite{kozyakin2014berger} are constrained by a square matrix $\Omega$ while switching sequences in the present paper are constrained by a DFA $\M$. Particularly, a matrix product $A_{\sigma_k}\dots A_{\sigma_1}$ where $\sigma=\sigma_1...\sigma_k\in L(\mathcal{M})$ is not necessarily Markovian because $\sigma$ is dependent on the state of the DFA that it starts from. For instance,  in Example \ref{ex2}, the input sequence $\sigma=231$ is allowed when starting from $q_2$ or $q_3$, but not allowed starting from $q_1$ or $q_4$; this implies that $\Omega$-admissibility in \cite{kozyakin2014berger} is not well-defined for the DFA given in Example 2. Therefore, the method of \cite{kozyakin2014berger} does not apply to the estimation of CJSR/CGSR. 
While the lifted matrices $F_i\otimes A_i$ in $\A_{\M}$ and $\Omega_i\otimes A_i$ in $\A_{\Omega}$ have  similar structures, the matrix $F_i$, which is the $i$-th sub-matrix of the \emph{transition structure matrix} of  $\M$, is different from the matrix $\Omega_i$, which is the $i$-th column of the \emph{square matrix} $\Omega$. Another difference between our method and \cite{kozyakin2014berger} is that equations \eqref{eqnxik} and \eqref{eqlift} provide 
\emph{a dynamic system interpretation} for $\A_{\M}$ under the STP framework. 
\end{remark}

\section{Conclusion}\label{sec:conclu}
In this paper, we proposed a matrix-based formulation for the arbitrary switching system and the constrained switching system using semi-tensor product of matrices. We showed that the matrix expression of a constrained switching system $S(\mathcal{A},\mathcal{M})$ is equivalent to that of a lifted arbitrary switching system $S(\mathcal{A}_\mathcal{M})$. We proved that the constrained joint/generalized spectral radius of a constrained switching system $S(\mathcal{A},\mathcal{M})$ is equivalent to the joint/generalized spectral radius of the lifted arbitrary switching system $S(\mathcal{A}_\mathcal{M})$. Therefore, off-the-shelf algorithms for approximating the joint/generalized spectral radius can be utilized directly to approximate the constrained joint/generalized spectral radius. In future work, we plan to develop more efficient algorithms for approximating CJSR by incorporating the proposed lifting method and other lifting methods in the literature.

\bibliographystyle{IEEEtran}
\bibliography{./stpswitching}

\end{document}